\setlist[enumerate]{label={\rm(\roman*)}}
\newtheorem{thm}{Theorem}[section]
\theoremstyle{remark}
\newtheorem{rem}[thm]{Remark}
\def\R{\mathbb R}
\def\RN{\R^N}
\renewcommand{\phi}{\varphi}
\newcommand{\dx}{\mathrm{\,d}x}
\newcommand{\dy}{\mathrm{\,d}y}
\newcommand{\irn}{\int_{\RN}}
\newcommand{\Lphi}{\mathscr L^\Phi}
\newcommand{\len}[1]{\left|{#1}\right|_N}
\newcommand{\lenn}[1]{\left|{#1}\right|_{2N}}
\newtoks\by
\newtoks\paper
\newtoks\book
\newtoks\jour
\newtoks\yr
\newtoks\pages
\newtoks\vol
\newtoks\publ
\newtoks\eds
\newtoks\proc
\def\ota{{\hbox{???}}}
\def\cLear{\by=\ota\paper=\ota\book=\ota\jour=\ota\yr=\ota
	\pages=\ota\vol=\ota\publ=\ota}
\def\endpaper{\textsc{\the\by}, \textit{\the\paper},
	{\the\jour} \textbf{\the\vol} (\the\yr), \the\pages.\cLear}
\def\endbook{\textsc{\the\by}, {\the\book}, \the\publ, \the\yr.\cLear}
\def\endprep{\textsc{\the\by}, \textit{\the\paper}, \the\jour.\cLear}
\def\endproc{\textsc{\the\by}, \textit{\the\paper}, \the\publ, \the\pages.\cLear}
\def\name#1#2{#1\,#2}
\def\et{ and }
\begin{document}

\title[A~weak-type expression of the Orlicz modular]{A~weak-type expression of the Orlicz modular}

\author[M.~K\v repela]{Martin K\v repela$^*$}
\address{Czech Technical University in Prague, Faculty of Electrical Engineering, Department of Mathematics, Technick\'a~2, 166~27 Praha~6, Czech Republic}
\email{martin.krepela@fel.cvut.cz}

\author[Z.~Mihula]{Zden\v ek Mihula$^{\ast\ast}$}
\address{Czech Technical University in Prague, Faculty of Electrical Engineering, Department of Mathematics, Technick\'a~2, 166~27 Praha~6, Czech Republic}
\email{mihulzde@fel.cvut.cz}

\author[J.~Soria]{Javier Soria$^\dagger$}
\address{Interdisciplinary Mathematics Institute (IMI), Department of Analysis and Applied Mathematics, Complutense University  of Madrid, 28040 Madrid, Spain}
\email{javier.soria@ucm.es}

\thanks{$^*$M. K\v repela was supported by the project OPVVV CAAS CZ.02.1.01/0.0/0.0/16\_019/0000778}
	 
\thanks{$^{\ast\ast}$Z. Mihula was supported by the project OPVVV CAAS CZ.02.1.01/0.0/0.0/16\_019/0000778 and by the grant GA\v CR P201/21-01976S}

\thanks{$^\dagger$J. Soria  was partially supported by grants PID2020-113048GB-I00 funded by MCIN/AEI/ 10.13039/501100011033, and Grupo UCM-970966}
	 
\begin{abstract}
An equivalent expression of Orlicz modulars in terms of measure of level sets of difference quotients is established. The result in a sense complements the famous Maz'ya--Shaposhnikova formula for the fractional Gagliardo--Slobodeckij seminorm and its recent extension to the setting of Orlicz functions.
\end{abstract}

\subjclass[2020]{ 46E30, 46A80, 26D10}

\keywords{Orlicz modular; weak-Orlicz class; distributional approach;  Maz'ya--Shaposhnikova formula; BBM formula}

\date{\today}

\maketitle

\section{Introduction}
%
%
%
	%
%
The fractional order Sobolev spaces $W^{s,p}(\RN)$, $p\in[1,\infty)$, $s\in(0,1)$, endowed with the Gagliardo--Slobodeckij seminorm, which is defined for
smooth compactly supported functions $u$ as
\begin{equation*}
|u|_{s,p}^p = \int_{\RN}\int_{\RN} \left(\frac{|u(x) - u(y)|}{|x-y|^s}\right)^p \frac1{|x-y|^N} \dx\dy,
\end{equation*}
have played an important role in the theory of partial differential equations and its applications for a long time (see the introductory section of \cite{Hitchhiker}). Much as it is tempting to think that
\begin{align}
\lim_{s\to1^-} |u|_{s,p}^p &\approx \int_{\RN} |\nabla u(x)|^p\dx \label{intro:s=1_wrong}\\
\intertext{or}
\lim_{s\to0^+} |u|_{s,p}^p &\approx \int_{\RN} |u(x)|^p\dx, \nonumber
\end{align}
the Gagliardo--Slobodeckij seminorm notoriously fails to capture these limiting cases\textemdash to that end, it is sufficient to consider any nonconstant $u\in\mathcal C_0^\infty(\RN)$ and observe that $|u|_{s,p}^p$ converges to $\infty$ as $s\to1^-$ or $s\to0^+$. Nevertheless, it was discovered around 20~years ago that these ``defects'' can be, in a sense, ``fixed'' by introducing certain compensatory factors. Namely, for every $u\in\mathcal C_0^\infty(\RN)$, a special case of what is now often called the Bourgain--Brezis--Mironescu formula \cite{BBM} tells us that

\begin{equation}
\lim_{s\to 1^+} (1-s) \int_{\RN}\int_{\RN} \left(\frac{|u(x) - u(y)|}{|x-y|^s}\right)^p \frac1{|x-y|^N} \dx\dy = C(N,p) \int_{\RN} |\nabla u(x)|^p\dx. \label{intro:BBM_formula}
\end{equation}

Moreover, V.G.~Maz'ya and T.~Shaposhnikova proved in \cite{MS} that

\begin{equation}
\lim_{s\to 0^+} s \int_{\RN}\int_{\RN} \left(\frac{|u(x) - u(y)|}{|x-y|^s}\right)^p \frac1{|x-y|^N} \dx\dy = C(N,p) \int_{\RN} |u(x)|^p \dx. \label{intro:MS_formula}
\end{equation}

Recently, a completely different approach, not involving integration of fractional difference quotients at all, to repairing \eqref{intro:s=1_wrong} was taken by H.~Brezis, J.~Van Schaftingen and P.-L.~Yung. They proved in \cite{BvSY} that, instead of introducing a compensatory factor, the limit as $s\to1^-$ can be recovered if the strong $L^p$ norm of fractional difference quotients is replaced by the weak $L^{p,\infty}$ quasi-norm. More precisely, they obtained the following result. 
Let $p\in[1,\infty)$ and $u\in\mathcal C_0^\infty(\RN)$ and define

\begin{equation*}
E_{\lambda,1} = \left|\left\{(x,y)\in \R^{2N}\colon x\neq y, \frac{|u(x) - u(y)|^p}{|x-y|^{N+p}}\geq \lambda^p\right\}\right|_{2N},
\end{equation*}
where $|\cdot|_{2N}$ stands for the Lebesgue measure on $\R^{2N}$. In \cite{BvSY} it was shown that

\begin{align*}
c(N,p)\int_{\RN} |\nabla u(x)|^p \dx \leq &\sup_{\lambda > 0} \lambda^p E_{\lambda,1} \leq C(N)\int_{\RN} |\nabla u(x)|^p\dx
\intertext{and}
\lim_{\lambda\to\infty} \lambda^p E_{\lambda,1} &= C(N,p) \int_{\RN} |\nabla u(x)|^p\dx.
\end{align*}
Following this innovatory approach, Q.~Gu and P.-L.~Yung established in \cite{GY} other, possibly even more unanticipated, formulae. They complement the Maz'ya--Shaposhnikova formula \eqref{intro:MS_formula} in the same way the result of Brezis, Van Schaftingen and Yung complements the Bourgain--Brezis--Mironescu formula \eqref{intro:BBM_formula}. Namely the result of \cite{GY} asserts that, for every $u\in L^p(\RN)$, $p\in[1, \infty)$,
\begin{align}
c(N)\int_{\RN} |u(x)|^p\dx \leq &\sup_{\lambda > 0} \lambda^p E_{\lambda,0} \leq C(N)\int_{\RN} |u(x)|^p\dx \label{intro:GU_Yung_equivalent_norm}
\intertext{and}
\lim_{\lambda\to\infty} \lambda^p E_{\lambda,0} &= C(N) \int_{\RN} |u(x)|^p\dx, \label{intro:GU_Yung_limit}
\end{align}
where

\begin{equation*}
E_{\lambda,0} = \left|\left\{(x,y)\in \R^{2N}\colon x\neq y, \frac{|u(x) - u(y)|^p}{|x-y|^{N}}\geq \lambda^p\right\}\right|_{2N}.
\end{equation*}

The classical results \eqref{intro:BBM_formula} and \eqref{intro:MS_formula} were recently considerably strengthened in \cite{ACPS0, ACPS1, FB-S} by replacing the $p$-th power in the integrals with Orlicz functions, thus allowing for non-polynomial growth.

The aim of this short paper is to similarly extend the new developments of \cite{GY}; i.e.~\eqref{intro:GU_Yung_equivalent_norm} and \eqref{intro:GU_Yung_limit}, by replacing the $p$-th power with a general Orlicz function globally satisfying the $\Delta_2$ condition. Therefore, we express the Orlicz modular in terms of measures of certain level sets, without using any integral. Our proof technique is based on the argument presented in \cite{GY}, appropriately extended to the Orlicz framework.


\medskip

In what follows, we introduce some basic notations and definitions, needed for understanding the setting we will be considering in our main result. A proper detailed treatment of Orlicz functions and classes may be found e.g.~in \cite{RR}. 

\medskip

A \emph{Young function} $\Phi\colon [0,\infty)\to[0,\infty)$ is any continuous convex function vanishing at $0$. Note that Young functions are nondecreasing. We say that a Young function $\Phi$ (globally) satisfies the \emph{$\Delta_2$ condition} if there is $k > 0$ such that $\Phi(2t)\leq k\Phi(t)$, for every $t>0$. Then necessarily $k\geq2$, which follows from the convexity of $\Phi$. We denote by $\Delta_2(\Phi)$ the infimum over all such $k$.

Given a Young function $\Phi$, we say that a measurable function $u\colon \RN\to\R$ belongs to the \emph{Orlicz class $\Lphi$}, and write $u\in\Lphi$, if
\begin{equation*}
\int_{\RN}\Phi(|u(x)|)\dx < \infty.
\end{equation*}
If $\Phi$ satisfies the $\Delta_2$ condition, $u\in\Lphi$ implies
\begin{equation*}
\int_{\RN}\Phi(\gamma|u(x)|)\dx < \infty, \qquad \text{for every $\gamma > 0$}.
\end{equation*}
As usual, $\omega_N$ denotes the volume of the unit ball in $\RN$.
	
\section{Main result}

\begin{thm}
	Let $\Phi$ be a Young function satisfying the $\Delta_2$ condition. Let $u\in\Lphi$ and for every $t>0$ define
		$$
			E_t = \left\{ (x,y)\in\R^{2N}\colon\ x\ne y,\ \frac{\Phi(|u(x)-u(y)|)}{|x-y|^N} \ge \Phi(t) \right\}.
		$$
	Then
		\begin{equation}\label{2}
			2\omega_N \irn \Phi(|u(x)|)\dx = \lim_{t\to 0^+} \Phi(t)\,\lenn{E_t}.
		\end{equation}
	Furthermore,
	\begin{equation}\label{main:equivalent_expression_modular}
	2\omega_N \irn \Phi(|u(x)|)\dx\leq \sup_{t>0} \Phi(t)\,\lenn{E_t} \leq 2\omega_N \Delta_2(\Phi) \irn \Phi(|u(x)|)\dx.
	\end{equation}
\end{thm}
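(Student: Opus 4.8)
The plan is to reduce everything to the one–dimensional computation that is already implicit in the Gu–Yung argument, by slicing $\R^{2N}$ along the direction $h = y - x$. Writing $y = x + h$ and using Fubini, the defining inequality for $E_t$ becomes $\Phi(|u(x) - u(x+h)|) \ge |h|^N \Phi(t)$, so
\begin{equation*}
|E_t|_{2N} = \irn \left| \left\{ h \in \RN \colon \Phi(|u(x) - u(x+h)|) \ge |h|^N \Phi(t) \right\} \right|_N \dx.
\end{equation*}
For the inner set, for fixed $x$ and $h \ne 0$ one has $\Phi(|u(x)-u(x+h)|) \ge |h|^N\Phi(t)$ if and only if $|h|^N \le \Phi(|u(x)-u(x+h)|)/\Phi(t)$; thus after passing to polar coordinates $h = r\sigma$ the inner measure is an integral over the sphere of $\omega_N \cdot \Phi(|u(x)-u(x+\cdot\sigma)|$-level radii$)$. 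The clean way to package this is: for $\rho > 0$ set $g_\rho(x) = |\{ h : |h|^N < \rho^{-1}\Phi(|u(x)-u(x+h)|)\}|_N$, and one shows, by the layer-cake / change of variables in $r$, that
\begin{equation*}
\Phi(t)\,|E_t|_{2N} = \omega_N \irn \int_{S^{N-1}} \int_0^\infty \chi_{\{ \Phi(|u(x) - u(x + r\sigma)|) \ge r^N \Phi(t)\}} \, N r^{N-1} \dx \, \mathrm{d}r \, \mathrm{d}\sigma \cdot \frac{\Phi(t)}{N\omega_N}\cdots
\end{equation*}
more cleanly, substituting $s = r^N$ turns the radial integral into $\int_0^\infty \chi_{\{\Phi(|u(x)-u(x+s^{1/N}\sigma)|) \ge s\Phi(t)\}}\,\mathrm{d}s$, whose value is $\Phi(|u(x)-u(x+s^{1/N}\sigma)|)/\Phi(t)$ evaluated "where it stabilizes'' — but since $u$ is not monotone along rays one instead keeps the honest expression $\Phi(t)\,|E_t|_{2N} = \omega_N \irn \int_{S^{N-1}} \Psi_{x,\sigma}(t)\,\mathrm{d}\sigma\,\dx$ where $\Psi_{x,\sigma}(t) = \Phi(t)\,|\{ s>0 : \Phi(|u(x)-u(x+s^{1/N}\sigma)|) \ge s\Phi(t)\}|_1$.

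The heart of the matter is then the pointwise (in $x$ and $\sigma$) analysis of $\Psi_{x,\sigma}(t)$ as $t \to 0^+$ and the two-sided bound for $\sup_t \Psi_{x,\sigma}(t)$. Fix $x, \sigma$ and abbreviate $v(s) = |u(x) - u(x + s^{1/N}\sigma)|$, so $\Psi(t) = \Phi(t)\,|\{s > 0 : \Phi(v(s)) \ge s\Phi(t)\}|_1$. Upper bound: if $\Phi(v(s)) \ge s\Phi(t)$ then $s \le \Phi(v(s))/\Phi(t)$, and one would like $\Psi(t) \le \int_0^\infty \chi_{\{\Phi(v(s))\ge s\Phi(t)\}}\,\mathrm{d}s \cdot \Phi(t)$; the trick from \cite{GY} to get the constant $\Delta_2(\Phi)$ is to note that on the level set, $s\Phi(t) \le \Phi(v(s)) \le \Phi(2\lfloor\cdots\rfloor)$ — more precisely one compares $\Phi(t)$ with $\Phi$ of the dyadic scale of $v(s)$. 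For the lower bound and the limit one uses that for a.e.\ $s$, $v(s) \to 0$ is false in general but rather: for a.e.\ $x$, $\int_{S^{N-1}}\int_0^\infty \chi\cdots$ is controlled by the Lebesgue point structure. The cleanest route, which I would follow, is to establish the two-sided bound
\begin{equation*}
\Phi(v(s)) \le \liminf_{t\to0^+}\left(\text{contribution at }s\right) \quad\text{and}\quad \le \Delta_2(\Phi)\,\Phi(v(s))
\end{equation*}
after integrating in $s$, i.e.\ to prove $\int_0^\infty \Phi(v(s))\,\mathrm{d}s \le \liminf_{t\to0^+}\Psi(t) \le \limsup_{t\to0^+}\Psi(t) \le \Delta_2(\Phi)\int_0^\infty\Phi(v(s))\,\mathrm{d}s$ is wrong for the sup but one checks that $\lim_{t\to0^+}\Psi(t) = \int_0^\infty \Phi(v(s))\,\mathrm{d}s$ by dominated convergence, since $\chi_{\{\Phi(v(s))\ge s\Phi(t)\}}\,\Phi(t)\,s \nearrow \Phi(v(s))$ pointwise in $s$ as $t\to0^+$, after rewriting $\Psi(t) = \int_0^\infty \Phi(t)\chi_{\{s\le\Phi(v(s))/\Phi(t)\}}\,\mathrm{d}s$ and changing variables $s \mapsto \Phi(v(s))/\Phi(t)$... which fails because $v$ is not monotone. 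So instead: bound $\Phi(t)\chi_{\{s\Phi(t)\le\Phi(v(s))\}} \le \Phi(v(s))/s$, no — one genuinely needs the dyadic comparison, giving $\Phi(t)\chi_{\{\cdots\}} \le \Delta_2(\Phi)\,\Phi(v(s))/s \cdot$(something integrable), and $\ge \Phi(v(s))/s$ up to lower order, then integrate and send $t \to 0$.

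Finally I would assemble the pieces: integrate the pointwise limit/bounds for $\Psi_{x,\sigma}(t)$ over $\sigma \in S^{N-1}$ and $x \in \RN$, using the $\Delta_2$ condition to produce an integrable majorant (this is where $u \in \Lphi$ with $\Delta_2$, hence $\Phi(\gamma|u|) \in L^1$ for all $\gamma$, is essential — one dominates $\int_{S^{N-1}}\int_0^\infty \Phi(v(s))\,\mathrm{d}s\,\mathrm{d}\sigma$ by a constant times $\irn \Phi(2|u|)$ via the substitution $y = x + s^{1/N}\sigma$ run in reverse), apply dominated convergence to pass the limit $t\to0^+$ through all the integrals, and evaluate the resulting constant-coefficient double integral $\omega_N \irn \int_{S^{N-1}}\int_0^\infty \Phi(|u(x) - u(x+s^{1/N}\sigma)|)\,\mathrm{d}s\,\mathrm{d}\sigma\,\dx$. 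Undoing the polar substitution turns $\int_{S^{N-1}}\int_0^\infty \Phi(v(s))\,\mathrm{d}s\,\mathrm{d}\sigma = \irn \Phi(|u(x) - u(x+h)|)\,\dh$, and then $\omega_N \irn\irn \Phi(|u(x)-u(y)|)\,\dy\,\dx$ — but that diverges, so the correct bookkeeping must keep the $|h|^N$ weight alive; the honest identity is $\Phi(t)|E_t|_{2N} = \omega_N \irn\irn \min\{1, \Phi(|u(x)-u(y)|)/(|x-y|^N\Phi(t))\} \cdots$, and the $t\to0$ limit of $\Phi(t)\int\int \min\{1,\cdots\}$ is, by monotone convergence, $\irn\irn \frac{\Phi(|u(x)-u(y)|)}{|x-y|^N}\cdot(\text{the measure where the min}=1)$... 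The genuinely delicate point — and the one I expect to be the main obstacle — is exactly this: making rigorous the claim that the "mass'' $\Phi(t)|E_t|_{2N}$ concentrates, as $t\to0^+$, near the diagonal $y\approx x$ and there behaves like $2\omega_N\Phi(|u(x)|)$ per unit $x$-volume (the factor $2$ coming from $|u(x)-u(y)| \to |u(x)|$ when $y$ escapes to where $u(y)=0$, combined with the $x\leftrightarrow y$ symmetry); this requires a careful Lebesgue-point argument for $u$ together with the $\Delta_2$-powered dominated convergence, and getting the constant $2\omega_N$ and the factor $\Delta_2(\Phi)$ in the upper bound of \eqref{main:equivalent_expression_modular} exactly right is where the real work lies.
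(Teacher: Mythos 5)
Your proposal is a plan rather than a proof: every key computation is begun and then retracted (``which fails because $v$ is not monotone'', ``no --- one genuinely needs\dots''), and none of the three assertions of the theorem is actually established. More importantly, the central heuristic you settle on at the end is wrong, and it contradicts your own parenthetical remark. The mass of $E_t$ does \emph{not} concentrate near the diagonal $y\approx x$ as $t\to0^+$; it escapes to $|x-y|\to\infty$ (this is the Maz'ya--Shaposhnikova $s\to0^+$ regime, not the Bourgain--Brezis--Mironescu $s\to1^-$ regime). For compactly supported $u$ this is exactly what makes the proof elementary and removes any need for a Lebesgue-point argument: for fixed $x$ in the support, the slice $\{y\colon (x,y)\in E_t,\ |y|>|x|\}$ differs from the set $\{y\colon |y|>R,\ |x-y|^N\le \Phi(|u(x)|)/\Phi(t)\}$ --- whose measure is $\omega_N\Phi(|u(x)|)/\Phi(t)+O(R^N)$ because $u(y)=0$ there --- by at most a set contained in $B_R$, and the $O(R^N)$ error is killed by the factor $\Phi(t)$ in the limit $t\to0^+$. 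The factor $2$ comes from the $x\leftrightarrow y$ symmetry (both orderings of $|x|,|y|$ contribute), not from any behaviour at the diagonal. Your slicing in polar coordinates is not wrong, but it is an unnecessary detour: one only ever needs the measure of a ball, never a radial integral, and the non-monotonicity of $s\mapsto|u(x)-u(x+s^{1/N}\sigma)|$ that derails your dominated-convergence attempts simply never enters the correct argument.

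Two further concrete gaps. First, the constant $\Delta_2(\Phi)$ in the upper bound of \eqref{main:equivalent_expression_modular} does not come from a ``dyadic comparison on the level set''; it comes from the convexity estimate $\Phi(|u(x)-u(y)|)\le\tfrac12\Phi(2|u(x)|)+\tfrac12\Phi(2|u(y)|)$, which places $E_t$ inside the union of two symmetric sets, each a union over $x$ of balls of measure $\omega_N\Phi(2|u(x)|)/\Phi(t)$; this yields $\sup_{t>0}\Phi(t)\lenn{E_t}\le 2\omega_N\irn\Phi(2|u(x)|)\dx$, and only then does $\Phi(2s)\le\Delta_2(\Phi)\Phi(s)$ enter. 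Second, the passage from compactly supported $u$ to general $u\in\Lphi$ is entirely absent from your sketch. This step is not a routine density argument: one must write $u=u\chi_{B_R}+u\chi_{\RN\setminus B_R}$, use convexity with a parameter $\lambda\in(0,1)$ to trap $E_t$ between set-algebraic combinations of the level sets of the two pieces, apply the compact-support estimate to one piece and the crude supremum bound to the other, and then send $t\to0^+$, $R\to\infty$, $\lambda\to1^-$ in that order, using $u\in\Lphi$ and $\Delta_2$ to guarantee that $\irn\Phi(\gamma|u|)\dx<\infty$ for every $\gamma>0$. Without this, \eqref{2} is only proved (at best) for compactly supported $u$, and the lower bound in \eqref{main:equivalent_expression_modular} is not proved at all.
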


\begin{proof}
	For every $t>0$ define the set
		$$
			H_t = \{ (x,y)\in E_t\colon\ |y|>|x|\}
		$$
	and observe that, thanks to symmetry, it satisfies $\lenn{H_t}=\frac12 \lenn{E_t}$.
	
	At first, we are going to suppose that $u$ has compact support; i.e.,~there exists $R>0$ such that
		$$
			\operatorname{supp} u \subset B_R.
		$$
	Notice that, if $(x,y)\in H_t$, then necessarily $x\in B_R$, otherwise we would have $x,y\in \R^N\setminus B_R$ and thus
	$u(x)=u(y)=0$, which would imply $(x,y)\notin H_t$.
	
	For a fixed $x\in B_R$ define the sets
		$$
			H_{t,x}  = \{ y\in\RN\colon\ (x,y)\in H_t\}  = \left\{ y\in\RN \colon |y|>|x|,\ \frac{\Phi(|u(x)-u(y)|)}{|x-y|^N} \ge \Phi(t) \right\}
		$$
	and
		$$
			H_{t,x,R}  = H_{t,x}\setminus B_R = \left\{ y\in\RN \colon |y|>R,\ \frac{\Phi(|u(x)|)}{|x-y|^N} \ge \Phi(t) \right\}.
		$$
	Obviously, we have
		\begin{equation}\label{3}
			H_{t,x,R} \subset H_{t,x} \subset H_{t,x,R}\cup B_R.
		\end{equation}
	The first inclusion together with the definition of $H_{t,x,R}$ implies
		\begin{equation}\label{4}
			\len{H_{t,x}}\ge \len{H_{t,x,R}} \ge \omega_N\frac{\Phi(|u(x)|)}{\Phi(t)} - \omega_NR^N,
		\end{equation}
	while the second inclusion in \eqref{3} implies
		\begin{equation}\label{5}
			\len{H_{t,x}} \le \omega_N\frac{\Phi(|u(x)|)}{\Phi(t)} + \omega_NR^N.
		\end{equation}
	Since $x\in B_R$ was arbitrarily chosen, we may integrate \eqref{4} and \eqref{5} over $B_R$ with respect to $x$ and multiply by $\Phi(t)$ to get
		\begin{align*}
			\omega_N \int_{B_R} \Phi(|u(x)|) \dx - \Phi(t)\omega_N^2R^{2N} & \le \Phi(t)\int_{B_R}\len{H_{t,x}} \dx \\
			& \le \omega_N \int_{B_R} \Phi(|u(x)|) \dx + \Phi(t)\omega_N^2R^{2N}.
		\end{align*}
	Recalling that $u$ is supported in $B_R$ and $\lenn{H_t}=\frac12 \lenn{E_t}$, we may further rewrite this as
		\begin{align}
			2\omega_N \irn \Phi(|u(x)|) \dx - 2\Phi(t)\omega_N^2R^{2N} & \le  \Phi(t)\lenn{E_t} \notag\\
			& \le 2\omega_N \irn \Phi(|u(x)|) \dx + 2\Phi(t)\omega_N^2R^{2N}.\label{8}
		\end{align}
	Letting $t\to 0^+$, we obtain \eqref{2}. 
	
	Now we are going to extend the result beyond compactly supported functions. Suppose that $u\colon\R^N\to \R$ is measurable. 
	For any fixed $t>0$, the set $E_t$ satisfies
		\begin{equation}\label{6}
			E_t \subset \left\{ (x,y)\in \R^{2N}\colon\ \frac{\Phi(2|u(x)|)}{|x-y|^N} \ge \Phi(t)\right\} 
			\cup \left\{ (x,y)\in \R^{2N}\colon\ \frac{\Phi(2|u(y)|)}{|x-y|^N} \ge \Phi(t)\right\}.
		\end{equation}
	Indeed, if $(x,y)\in\R^N$ is not contained in either of the two sets on the right-hand side, then, by monotonicity and convexity of $\Phi$,
		\begin{align*}
			\Phi(|u(x)-u(y)|) & \le \Phi(|u(x)|+|u(y)|)\\
								& \le \frac12 \Phi\left(2|u(x)|\right) + \frac12\Phi\left(2|u(y)|\right) \\
								& < \Phi(t)|x-y|^N,
		\end{align*}
	hence $(x,y)\notin E_t$. This shows \eqref{6}.
	
	Using the symmetry of the two sets on the right-hand side of \eqref{6}, we obtain
		\begin{align*}
			\lenn{E_t}   \le 2 \irn \irn \chi_{\left\{ (x,y)\in\R^{2N}\colon\ |x-y|^N\le \frac{\Phi(2|u(x)|)}{\Phi(t)}\right\}}(x,y) \dy \dx 
			 = 2\frac{\omega_N}{\Phi(t)} \irn \Phi(2|u(x)|) \dx,
		\end{align*}
	hence
		\begin{equation}\label{7}
			\sup_{t>0}\Phi(t)\lenn{E_t} \le 2 \omega_N \irn \Phi(2|u(x)|) \dx.
		\end{equation}
	Notice that 
	neither the assumption 
	nor the $\Delta_2$ condition of $\Phi$ has been used yet, so this estimate in fact holds for any measurable $u$ and any Young function $\Phi$. If $\Phi$ satisfies the $\Delta_2$ condition, \eqref{7} readily implies the second inequality in \eqref{main:equivalent_expression_modular}.
		
	Assume that $u\in \Lphi$. Choose $R>0$ and define
		\begin{equation}\label{14}
			u_R = u\chi_{B_R}, \qquad v_R = u - u_R.
		\end{equation}
	Furthermore, choose $t>0$, $\lambda\in(0,1)$ and define
		\begin{align*}
			A_1 & = \left\{ (x,y)\in \R^{2N}\colon\ \frac{\Phi\left(\frac1\lambda|u_R(x)-u_R(y)|\right)}{|x-y|^N} \ge \Phi(t)\right\}
			\intertext{and}
			A_2 & = \left\{ (x,y)\in \R^{2N}\colon\ \frac{\Phi\left(\frac1{1-\lambda}|v_R(x)-v_R(y)|\right)}{|x-y|^N} \ge \Phi(t)\right\}.
		\end{align*}
	Then $E_t\subset A_1 \cup A_2$. Similarly as before, this can be seen by using monotonicity and convexity of $\Phi$ to get
		\begin{align*}
			\Phi(|u(x)-u(y)|) & \le \Phi(|u_R(x)-u_R(y)| + |v_R(x)-v_R(y)|) \\
			& \le \lambda \Phi\left(\frac1\lambda|u_R(x)-u_R(y)|\right) + (1-\lambda) \Phi\left(\frac1{1-\lambda}|v_R(x)-v_R(y)|\right),
		\end{align*}
	from which the inclusion follows. 	
	
	Observe that the set $A_1$ is obtained by replacing $u$ with $\frac{u_R}\lambda$ in the definition of $E_t$. Since $\frac{u_R}\lambda$ is compactly supported and belongs to $\Lphi$ (since $\Phi$  satisfies the $\Delta_2$ condition),
	we may use the previously obtained estimate \eqref{8} with $A_1$ and $\frac{u_R}\lambda$ in place of $E_t$ and $u$, respectively, to get
		\begin{equation*}
			\Phi(t)\lenn{A_1} \le 2\omega_N \irn \Phi\left(\frac{|u_R(x)|}\lambda\right) \dx + 2\Phi(t)\omega_N^2R^{2N}.
		\end{equation*}
	Analogously, applying \eqref{7} to the function $\frac{v_R}{1-\lambda}$ in place of $u$ ($A_2$ plays the role of $E_t$ for this function 
	), we get
		\begin{equation*}
			\Phi(t)\lenn{A_2} \le 2 \omega_N \irn \Phi\left(\frac{2|v_R(x)|}{1-\lambda}\right) \dx. 
		\end{equation*}
	As $E_t\subset A_1 \cup A_2$, this gives
		\begin{equation*}
			\Phi(t)\lenn{E_t} \le 2\omega_N \left[ \irn \Phi\left(\frac{|u_R(x)|}\lambda\right) \dx + \Phi(t)\omega_NR^{2N} + \irn \Phi\left(\frac{|2v_R(x)|}{1-\lambda}\right) \dx\right].
		\end{equation*}
	Since $|u_R|\le |u|$, $|v_R|\le |u|$, $u\in\Lphi$ and $\Phi$ satisfies the $\Delta_2$ condition, both integrals above are finite regardless the choice of $\lambda$,
	and the second integral (with a fixed $\lambda$) vanishes as $R\to\infty$ by the dominated convergence theorem. Hence, consecutively letting $t\to 0^+$, $R\to \infty$ and $\lambda\to 1^-$, we finally obtain
		\begin{equation}\label{12}
			\limsup_{t\to 0^+} \Phi(t)\lenn{E_t} \le 2\omega_N \irn \Phi(|u(x)|) \dx.
		\end{equation}
	
	It remains to show the opposite inequality for the lower limit. 
	Fix $R>0$, $\lambda\in(0,1)$ and define $u_R$, $v_R$ as in \eqref{14}. Then $|u|-|v_R|=|u_R|$ and, by convexity of $\Phi$, for any $(x,y)\in\R^{2N}$ we have
		\begin{equation}\label{13}
			\frac1\lambda \Phi(\lambda|u_R(x)-u_R(y)|) - \frac{1-\lambda}\lambda \Phi\left( \frac{\lambda}{1-\lambda}|v_R(x)-v_R(y)|\right) \le \Phi(|u_R(x)-u_R(y)|) .
		\end{equation}
	For any $t>0$ define
		\begin{align*}
			A_3 & = \left\{ (x,y)\in \R^{2N}\colon\ \frac{\Phi\left(\lambda|u_R(x)-u_R(y)|\right)}{|x-y|^N} \ge \Phi(t)\right\}
			\intertext{and}
			A_4 & = \left\{ (x,y)\in \R^{2N}\colon\ \frac{\Phi\left(\frac\lambda{1-\lambda}|v_R(x)-v_R(y)|\right)}{|x-y|^N} \ge \Phi(t)\right\}.
		\end{align*}	
	Whenever $(x,y)\in A_3\setminus A_4$, we have
		\begin{align*}
			\Phi(t) & = \frac1\lambda \Phi(t) - \frac{1-\lambda}\lambda \Phi(t)\\
			& < \frac1{|x-y|^N} \left( \frac1\lambda \Phi(\lambda|u_R(x)-u_R(y)|) - \frac{1-\lambda}\lambda \Phi\left( \frac{\lambda}{1-\lambda}|v_R(x)-v_R(y)|\right) \right) \\
			& \le \frac{\Phi(|u_R(x)-u_R(y)|)}{|x-y|^N},
		\end{align*}		
	where the last inequality follows from \eqref{13}. This shows that $(x,y)\in E_t$. Thus, we have $E_t \supset A_3\setminus A_4$.
	
	We proceed analogously as before, realizing that $A_3$ plays the role of $E_t$ for the compactly supported function $\lambda u_R$, we use \eqref{8}	to obtain
		$$
			\Phi(t)\lenn{A_3} \ge 2\omega_N \left[ \irn \Phi(\lambda|u_R(x)|)\dx - \Phi(t)\omega_NR^{2N}\right].
		$$
	Similarly, an appropriate interpretation of \eqref{7} yields
		$$
			\Phi(t)\lenn{A_4} \le 2\omega_N \irn \Phi\left(\frac{2\lambda}{1-\lambda}|v_R(x)|\right)\dx.
		$$
	Hence,
		\begin{align*}
			\Phi(t)\lenn{E_t} & \ge \Phi(t)\left(\lenn{A_3} - \lenn{A_4}\right) \\
			& \ge 2\omega_N \left[ \irn \Phi\left(\frac{|u_R(x)|}\lambda\right) \dx - \Phi(t)\omega_NR^{2N} - \irn \Phi\left(\frac{2|v_R(x)|}{1-\lambda}\right) \dx\right].
		\end{align*}
	Letting $t\to 0^+$, $R\to \infty$ and $\lambda\to 1^-$, in this order, now yields
		\begin{equation}\label{15}
			\liminf_{t\to 0^+} \Phi(t)\lenn{E_t} \ge 2\omega_N \irn \Phi(|u(x)|) \dx.	
		\end{equation} 
	Once again, the $\Delta_2$ condition of $\Phi$ as well as the assumption $u\in\Lphi$ are both required in this step. This clearly implies the first inequality in \eqref{main:equivalent_expression_modular}. Finally, combining \eqref{15} with \eqref{12}, we	arrive at \eqref{2}, and so the proof is complete.	
\end{proof}

\begin{rem}
Applying the theorem to $\Phi(t)=t^p$, $p\in[1,\infty)$, we recover \cite[Theorem~1]{GY} with the same multiplicative constants.
\end{rem}

\end{document}